\documentclass[12pt,a4paper]{article}
\usepackage{cmap}
\usepackage[T1]{fontenc}
\usepackage{lmodern}
\usepackage{silence}
\usepackage{microtype}
\usepackage[english]{babel}
\usepackage{geometry}
\usepackage{amsmath,amsfonts,amssymb,mathtools}
\usepackage{enumitem}
\usepackage{amsthm}
\usepackage{hyperref}
\hypersetup{
	colorlinks=true,
	linkcolor=blue!70!black,
	citecolor=green!50!black,
	urlcolor=blue!60!black,
	pdftitle={Nullcline geometry constrains the location of oscillatory instabilities in planar predator--prey systems},
	pdfauthor={E. Chan-Lopez, A. Martin-Ruiz, V. Castellanos},
	pdfsubject={Nonlinear dynamics and bifurcation theory in predator-prey systems},
	pdfkeywords={Hopf bifurcation, Neimark-Sacker bifurcation, predator-prey systems, nullcline geometry, nonlinear dynamics}
}
\usepackage[capitalize,nameinlink]{cleveref}
\usepackage{aliascnt}
\usepackage{tikz}
\usetikzlibrary{calc,math,positioning, arrows.meta, shapes.geometric, decorations.pathreplacing}

\newtheorem{theorem}{Theorem}

\newtheorem{lemma}[theorem]{Lemma}
\newtheorem{corollary}[theorem]{Corollary}
\theoremstyle{definition}
\newtheorem{definition}[theorem]{Definition}
\newtheorem{remark}[theorem]{Remark}
\newcommand{\R}{\mathbb{R}}
\newcommand{\tr}{\operatorname{tr}}
\allowdisplaybreaks

\title{\textbf{Nullcline geometry constrains the location\\
		of oscillatory instabilities in planar predator--prey systems}}
\author{E.\ Chan-L\'opez\textsuperscript{1}\thanks{Corresponding author:
		\texttt{eduardo.clopez13@gmail.com}}
	\and A.\ Mart\'in-Ruiz\textsuperscript{2}
	\and V.\ Castellanos\textsuperscript{1}}
\date{%
	\textsuperscript{1}Divisi\'on Acad\'emica de Ciencias B\'asicas,
	Universidad Ju\'arez Aut\'onoma de Tabasco, 86690 Cunduac\'an,
	Tabasco, Mexico\\
	\textsuperscript{2}Instituto de Ciencias Nucleares, Universidad Nacional
	Aut\'onoma de M\'exico, 04510 Ciudad de M\'exico, Mexico}

\begin{document}
	\maketitle
	
	\begin{abstract}
		We prove that the critical structure of the prey nullcline governs where
		oscillatory instabilities can emerge in planar predator--prey systems.
		For a broad class of Gause--type models, we establish a general geometric
		localization theorem: the prey coordinate of every Hopf bifurcation point
		is confined between consecutive critical points of the prey nullcline.
		The mechanism is purely geometric. Along the nullcline, the diagonal
		Jacobian entry $J_{11}$ is proportional to the nullcline slope $g'(x)$,
		independently of the bifurcation parameter, while $J_{22}\le 0$ at any
		coexistence equilibrium. At critical points of the nullcline
		($g'=0$), the Jacobian trace becomes strictly negative, creating a
		spectral barrier that precludes oscillatory instability. This barrier
		partitions the state space into dynamically distinct regions, confining
		the onset of limit--cycle oscillations to the ascending branches of the
		nullcline. We illustrate the principle in three canonical families---
		Bazykin's model (quadratic nullcline), a Leslie--Holling type~IV system
		with harvesting (cubic nullcline), and the Crowley--Martin model with
		predator interference (rational nullcline)---for which we obtain
		closed--form Hopf bifurcation loci. The same geometric mechanism extends
		to discrete time: for the forward Euler map with step size $\tau$, an
		exact identity $\det(J^G) - 1 = \tau[\tr(J)+\tau\det(J)]$ shows that
		the Neimark--Sacker locus is strictly disjoint from the continuous
		Hopf locus yet tracks it on the same ascending branch at an
		$O(\tau)$ distance, crossing the spectral barrier only in the
		coarse--step regime. The results demonstrate that the functional responses
		responsible for predator saturation and interference not only bound
		the efficiency of interactions but also imprint a geometric
		architecture on the bifurcation landscape.
	\end{abstract}
	
	\medskip
	\noindent\textbf{Keywords:} predator--prey dynamics; Hopf bifurcation;
	prey nullcline; oscillatory instability; geometric localization;
	bifurcation structure; Crowley--Martin functional response; nonlinear
	dynamics.
	
	\medskip
	\noindent\textbf{MSC 2020:} 34C23, 37G15, 92D25, 34C05.
	
	\section{Introduction}\label{sec:intro}
	The onset of sustained oscillations in predator--prey systems is among
	the most fundamental phenomena in nonlinear ecology. The classical
	explanation, due to Rosenzweig and MacArthur~\cite{Rosenzweig1963},
	links this onset to a geometric feature of the prey nullcline: a
	coexistence equilibrium destabilizes via a Hopf bifurcation when it
	crosses the vertex of the hump--shaped nullcline, giving rise to
	limit--cycle dynamics and the paradox of
	enrichment~\cite{Rosenzweig1971}. This graphical criterion has remained
	one of the most productive organizing principles in mathematical
	ecology, because it connects a static geometric property---the shape
	of the nullcline---to a dynamic transition---the emergence of
	oscillations.
	
	In models that incorporate richer ecological mechanisms---intraspecific
	predator competition~\cite{Bazykin1998}, non--monotone functional
	responses that capture group defense~\cite{Andrews1968,LiXiao2007,
		HuangXiaZhang2016}, harvesting~\cite{ChengZhang2021}, or Allee
	effects~\cite{ShangQiao2023,ZhangEtAl2025}---the bifurcation landscape
	becomes considerably more complex. Multiple coexistence equilibria
	appear and disappear through saddle--node bifurcations; Bogdanov--Takens
	points organize homoclinic structures and limit--cycle creation;
	degenerate Hopf bifurcations of codimension up to five govern the number
	and stability of coexisting limit cycles. In this richer setting, a
	natural question arises: \emph{does the nullcline geometry continue to
		organize the bifurcation structure, and if so, how?}
	
	The purpose of this paper is to show that a precise geometric
	localization principle governs the Hopf bifurcation in all these
	enriched models. We prove, for a large class of Gause--type predator--prey
	systems, that the prey coordinate of any Hopf bifurcation point is
	confined between consecutive critical points of the prey nullcline. The
	confinement mechanism is the same in every case: the critical points of
	the nullcline impose an algebraic constraint on the Jacobian that
	prevents the spectral condition for oscillatory instability from being
	satisfied.
	
	More precisely, at any coexistence equilibrium on the prey nullcline
	\(y = g(x)\), the \((1,1)\)-entry of the Jacobian decomposes as
	\begin{equation}\label{eq:intro_decomp}
		\tr(J) = \underbrace{J_{11}(x)}_{\text{sign governed by } g'(x)}
		+ \underbrace{J_{22}(x,y)}_{\leq\, 0}.
	\end{equation}
	At a critical point \(x_c\) where \(g'(x_c) = 0\), we obtain \(J_{11} = 0\),
	whence \(\tr(J) = J_{22} < 0\): the equilibrium is necessarily stable,
	and no oscillatory instability can develop. On the descending branch
	(\(g' < 0\)), \(J_{11} < 0\) reinforces this stability. Only on ascending
	branches (\(g' > 0\)) can \(J_{11}\) contribute a positive term large enough
	to drive the trace through zero---the threshold for Hopf bifurcation.
	The critical points of the nullcline thus act as \emph{spectral
		barriers} that partition the state space into regions of qualitatively
	distinct dynamical behavior.
	
	This geometric principle has several consequences:
	\begin{enumerate}[label=(\roman*)]
		\item A general theorem that captures the mechanism under mild
		hypotheses on the functional response and predator self--regulation.
		\item Explicit localization formulas for three model families
		spanning quadratic, cubic, and rational nullcline geometries.
		\item For the Crowley--Martin model with predator interference, a
		closed--form expression for the Hopf bifurcation locus that recovers
		the Rosenzweig--MacArthur criterion in the limit of vanishing
		interference.
		\item An extension to discrete--time systems: for the forward Euler
		discretization with step size $\tau$, an exact identity
		$\det(J^G) - 1 = \tau[\tr(J) + \tau\det(J)]$ shows that the
		Neimark--Sacker locus is strictly disjoint from the continuous Hopf
		locus yet tracks it on the same ascending branch at an $O(\tau)$
		distance, crossing the spectral barrier onto the descending branch
		only in the coarse--step regime $\tau > |J_{22}|/\det(J)$.
		\item Because the Hopf bifurcation is the primary route to
		oscillatory dynamics, and the Bogdanov--Takens point organizes the
		nearby homoclinic and limit--cycle structure, the localization
		constrains the entire local bifurcation architecture to a
		geometrically determined region of state space.
	\end{enumerate}
	
	The geometric organization of the localization principle is summarized
	in \Cref{fig:architecture}. The prey nullcline vertex is a spectral
	barrier: at the vertex the prey contribution to the trace vanishes and
	no oscillatory onset is possible. The ascending branch is the admissible
	region for the continuous Hopf bifurcation and, for small Euler step,
	for the discrete Neimark--Sacker bifurcation as well; the two loci sit
	adjacent to one another, separated by a margin of order $\tau$. The
	spectral insets illustrate the corresponding eigenvalue transitions.
	This figure serves as a conceptual guide for the results developed in
	the subsequent sections.
	
	\begin{figure}[htbp]
		\centering
		\begin{tikzpicture}[>=Stealth, line cap=round, line join=round]
			\colorlet{hopfbg}{blue!8}
			\colorlet{hopfline}{blue!60!black}
			\colorlet{nsbg}{green!10}
			\colorlet{nsline}{green!50!black}
			\colorlet{curvecolor}{black!75}
			\colorlet{axiscolor}{black!60}
			
			\begin{scope}[shift={(0,0)}]
				\fill[hopfbg] (0.5, 0) -- (0.5, 1.46) .. controls (2.5, 4.5) and (4, 4.8) .. (5, 4.8) -- (5, 0) -- cycle;
				\fill[nsbg] (5, 0) -- (5, 4.8) .. controls (6, 4.8) and (7.5, 4.5) .. (9.5, 1.46) -- (9.5, 0) -- cycle;
				\draw[ultra thick, curvecolor] (0.5, 1.46) .. controls (2.5, 4.5) and (4, 4.8) .. (5, 4.8) .. controls (6, 4.8) and (7.5, 4.5) .. (9.5, 1.46);
				\draw[->, thick, axiscolor] (0,0) -- (10.5,0) node[right, text=black] {Prey density, $x$};
				\draw[->, thick, axiscolor] (0,0) -- (0,5.5) node[above, text=black] {Predator density, $y$};
				\draw[dashed, thick, black!50] (5, 4.8) -- (5, 0) node[below, text=black] {$x_v$};
				\fill (5, 4.8) circle (2.5pt) node[above=4pt] {$g'(x_v)=0$};
				\node[fill=white, inner sep=3pt, font=\small, text=black!80] at (5, 3.4) {Spectral boundary};
				\node[font=\normalsize, text=hopfline, align=center] at (2.7, 1.7) {Admissible Hopf /};
				\node[font=\normalsize, text=nsline, align=center] at (2.7, 1.3) {NS region ($g'>0$)};
				\node[font=\footnotesize, text=black!55, align=center] at (7.4, 1.5) {Stable region};
				\node[font=\footnotesize, text=black!55, align=center] at (7.4, 1.1) {(no oscillatory onset)};
			\end{scope}
			
			\begin{scope}[shift={(2.7, 8)}]
				\node[font=\small\bfseries] at (0, 2.5) {Continuous--time limit};
				\node[font=\footnotesize, text=black!70] at (0, 2.1) {Imaginary axis crossing};
				\draw[->, axiscolor] (-1.8,0) -- (1.8,0) node[right, font=\scriptsize, text=black] {Re};
				\draw[->, axiscolor] (0,-1.5) -- (0,1.5) node[above, font=\scriptsize, text=black] {Im};
				\draw[->, thick, hopfline] (-0.8, 0.8) -- (0.8, 0.8);
				\fill[white, draw=hopfline, thick] (-0.5, 0.8) circle (1.8pt); 
				\fill[hopfline] (0, 0.8) circle (2.5pt); 
				\draw[->, thick, hopfline] (-0.8, -0.8) -- (0.8, -0.8);
				\fill[white, draw=hopfline, thick] (-0.5, -0.8) circle (1.8pt);
				\fill[hopfline] (0, -0.8) circle (2.5pt);
			\end{scope}
			
			\begin{scope}[shift={(7.3, 8)}]
				\node[font=\small\bfseries] at (0, 2.45) {Discrete--time map};
				\node[font=\footnotesize, text=black!70] at (0, 2.1) {Unit circle crossing};
				\draw[->, axiscolor] (-1.8,0) -- (1.8,0) node[right, font=\scriptsize, text=black] {Re};
				\draw[->, axiscolor] (0,-1.5) -- (0,1.5) node[above, font=\scriptsize, text=black] {Im};
				\draw[thick, black!50] (0,0) circle (1.1);
				\draw[->, thick, nsline] (0.3, 0.3) -- (1.4, 1.4);
				\fill[white, draw=nsline, thick] (0.49, 0.49) circle (1.8pt); 
				\fill[nsline] (0.778, 0.778) circle (2.5pt); 
				\draw[->, thick, nsline] (0.3, -0.3) -- (1.4, -1.4);
				\fill[white, draw=nsline, thick] (0.49, -0.49) circle (1.8pt);
				\fill[nsline] (0.778, -0.778) circle (2.5pt);
			\end{scope}
			
			\draw[->, dashed, thick, hopfline] (2.7, 6.2) -- (2.7, 4.5);
			\draw[->, dashed, thick, nsline] (7.3, 6.2) -- (7.3, 4.5);
			
			\begin{scope}[shift={(10.0, 4.8)}]
				\node[right, font=\normalsize\bfseries] at (0, 3.8) {Spectral Rules};
				\draw[thick, black!20] (0, 3.4) -- (4.7, 3.4);
				\fill[hopfbg] (0, 1.5) rectangle (4.7, 2.9);
				\draw[left color=hopfline, right color=hopfbg, draw=none] (0, 1.5) rectangle (0.1, 2.9);
				\node[right, font=\small\bfseries, text=hopfline] at (0.2, 2.55) {Hopf Bifurcation};
				\node[right, font=\footnotesize] at (0.2, 2.15) {Critical trace: $\operatorname{tr}(J) = 0$};
				\node[right, font=\footnotesize] at (0.2, 1.75) {Constraint: $J_{11} \propto g'(x) > 0$};
				\fill[nsbg] (0, -0.5) rectangle (4.7, 0.9);
				\draw[left color=nsline, right color=nsbg, draw=none] (0, -0.5) rectangle (0.1, 0.9);
				\node[right, font=\small\bfseries, text=nsline] at (0.2, 0.55) {Neimark-Sacker};
				\node[right, font=\footnotesize] at (0.2, 0.15) {Critical det.: $\operatorname{det}(J_G) = 1$};
				\node[right, font=\footnotesize] at (0.2, -0.25) {Locus: $\operatorname{tr}(J) = -\tau\operatorname{det}(J)$};
				\draw[thick, black!20] (0, -1.0) -- (4.7, -1.0);
			\end{scope}
		\end{tikzpicture}
		\caption{Geometric organization of oscillatory instabilities along the
			prey nullcline. The critical point $x_{\mathrm{v}}$ acts as a spectral
			barrier: at the vertex $J_{11}=p(x)g'(x)=0$, so the trace reduces to
			$\tr(J)=J_{22}<0$ and no oscillatory onset is possible. The ascending
			branch ($g'>0$) is the admissible region for both the continuous Hopf
			bifurcation (where $\tr(J)=0$; eigenvalues cross the imaginary axis,
			upper--left inset) and, for small Euler step $\tau$, the discrete
			Neimark--Sacker bifurcation (where $\tr(J)=-\tau\det(J)$; eigenvalues
			cross the unit circle, upper-right inset). The two loci are distinct
			but adjacent, separated by a margin of order $\tau$ in accordance with
			the exact identity $\det(J^G)-1=\tau[\tr(J)+\tau\det(J)]$. The
			descending branch supports no oscillatory onset in the
			faithful--discretization regime. The localization is governed by the
			trace decomposition $\operatorname{tr}(J) = p(x)\,g'(x) + J_{22}$.}
		\label{fig:architecture}
	\end{figure}
	
	\subsection{Related work}\label{subsec:related}
	Hammoum, Sari, and Yadi~\cite{HammoumSariYadi2023} extended the
	Rosenzweig--MacArthur stability criterion to a general Gause model with
	variable predator mortality, defining an arc~$\mathcal{A}$ of the
	ascending branch where $\tr(J) \geq 0$ and computing first Lyapunov
	coefficients for several models. Their framework determines stability
	and criticality but does not produce closed--form Hopf loci, does not
	establish that $\mathcal{A}$ is strictly confined below the nullcline
	vertex, and requires monotone functional response ($p'(x) > 0$),
	excluding models with group defense.
	
	Lu and Huang~\cite{LuHuang2021} analyzed Bazykin's model in depth,
	uncovering degenerate Hopf and focus--type BT bifurcations of
	codimension~3. For the Holling type~IV Leslie system, Li and
	Xiao~\cite{LiXiao2007}, Huang et~al.~\cite{HuangXiaZhang2016}, Dai
	and Zhao~\cite{DaiZhao2018}, and Cheng and
	Zhang~\cite{ChengZhang2021} performed increasingly refined bifurcation
	analyses. These works address specific bifurcation phenomena but do not
	investigate the geometric constraints imposed by the nullcline on the
	admissible bifurcation regions.
	
	\section{Framework and the general localization theorem}\label{sec:general}
	We consider planar predator--prey systems of Gause type
	\begin{equation}\label{eq:gause}
		\dot{x} = x\,\phi(x) - y\,p(x),\qquad
		\dot{y} = y\,Q(x,y),
	\end{equation}
	defined on $\R^2_+$. The functions $\phi$, $p$ and $Q$ are assumed
	sufficiently smooth. A coexistence equilibrium (CEP) is a point
	$P^*=(x^*,y^*)$ with $x^*,y^*>0$ satisfying
	\begin{equation}\label{eq:gause_eq}
		x^*\phi(x^*) = y^*\,p(x^*),\qquad Q(x^*,y^*) = 0.
	\end{equation}
	The prey nullcline is obtained from the first equation:
	\[
	y = g(x) \coloneqq \frac{x\,\phi(x)}{p(x)},\qquad x>0.
	\]
	A Hopf bifurcation at a CEP requires that the Jacobian
	\[
	J = \begin{pmatrix}
		J_{11} & J_{12}\\
		J_{21} & J_{22}
	\end{pmatrix}
	\]
	satisfies $\tr(J)=0$ and $\det(J)>0$. The key observation is that on the
	nullcline the entry $J_{11}$ is governed entirely by the slope of $g$.
	
	\begin{lemma}\label{lem:J11_prop}
		For system~\eqref{eq:gause}, assume $p(x)>0$ for $x>0$. Then along the
		prey nullcline $y=g(x)$,
		\begin{equation}\label{eq:J11_gprime}
			J_{11}(x) = p(x)\,g'(x).
		\end{equation}
	\end{lemma}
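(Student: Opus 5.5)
The identity follows from a direct computation: differentiate the prey equation with respect to $x$, then use the nullcline relation to eliminate $y$. Write $f(x,y) = x\phi(x) - y\,p(x)$ for the right-hand side of the prey equation in~\eqref{eq:gause}. Then
\[
J_{11}(x,y) = \frac{\partial f}{\partial x}(x,y) = \bigl(x\phi(x)\bigr)' - y\,p'(x).
\]
Since $p(x)>0$ for $x>0$, the nullcline $y = g(x) = x\phi(x)/p(x)$ is well defined and smooth. We can therefore evaluate $J_{11}$ at $y = g(x)$.

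\textbf{Step 1 (the slope of $g$).} By the quotient rule,
\[
g'(x) = \frac{\bigl(x\phi(x)\bigr)'\,p(x) - x\phi(x)\,p'(x)}{p(x)^2}.
\]
Multiplying by $p(x)$ gives
\[
p(x)\,g'(x) = \bigl(x\phi(x)\bigr)' - \frac{x\phi(x)}{p(x)}\,p'(x) = \bigl(x\phi(x)\bigr)' - g(x)\,p'(x).
\]

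\textbf{Step 2 (comparison).} The right-hand side of Step~1 is exactly $J_{11}(x,g(x))$, so~\eqref{eq:J11_gprime} holds. An equivalent way to see this is to differentiate the identity $f(x,g(x))\equiv 0$ in $x$. This gives $f_x + f_y\,g' = 0$, and since $f_y = -p(x)$, it yields $f_x = p\,g'$ directly. I will present this implicit-differentiation version as the main argument, since it makes clear that the identity does not involve $Q$ or any bifurcation parameter.

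\textbf{Potential obstacle.} There is no real obstacle. The only points to state explicitly are the following:
\begin{itemize}
\item The hypothesis $p>0$ is needed so that $g$ is defined and differentiable.
\item $J_{11}$ depends only on the prey equation, not on $Q$.
\item The evaluation is at points of the nullcline. Hence it applies at every coexistence equilibrium, by~\eqref{eq:gause_eq}.
\end{itemize}
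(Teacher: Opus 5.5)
Your proposal is correct, and Steps 1--2 are exactly the paper's proof: differentiate the prey right-hand side, substitute $y=g(x)$, and match the result with $p(x)g'(x)$ computed by the quotient rule. The implicit-differentiation version you plan to lead with ($f(x,g(x))\equiv 0 \Rightarrow f_x=-f_y\,g'=p\,g'$) is the same computation in repackaged form, with the small bonus that replacing $f_y=-p$ by $f_y=-(\Phi+y\Phi_y)$ immediately gives the formula of Remark~\ref{rem:extended_lemma}.
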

	\begin{proof}
		Direct differentiation gives
		\[
		J_{11} = \frac{\partial}{\partial x}\bigl[x\phi(x)-y\,p(x)\bigr]
		= \phi(x)+x\phi'(x)-y\,p'(x).
		\]
		Evaluating on $y=g(x)=x\phi(x)/p(x)$,
		\[
		J_{11} = \phi(x)+x\phi'(x) - \frac{x\phi(x)}{p(x)}p'(x).
		\]
		On the other hand,
		\[
		g'(x) = \frac{[\phi(x)+x\phi'(x)]p(x) - x\phi(x)p'(x)}{p(x)^2},
		\]
		hence $p(x)g'(x) = \phi(x)+x\phi'(x) - \frac{x\phi(x)p'(x)}{p(x)} = J_{11}$.
	\end{proof}
	
	The sign of $J_{11}$ is therefore exactly the sign of $g'(x)$.
	
	\begin{remark}[Extension to predator--dependent functional responses]\label{rem:extended_lemma}
		Some standard models, such as Beddington--DeAngelis, Crowley--Martin,
		and Hassell--Varley, replace $p(x)$ by a predator-dependent functional
		response $\Phi(x,y)$, so the prey equation reads
		$\dot{x} = x\phi(x) - y\,\Phi(x,y)$. The nullcline is then defined
		implicitly by $g(x) = x\phi(x)/\Phi(x,g(x))$. Repeating the
		calculation of Lemma~\ref{lem:J11_prop} in this generalized setting
		yields
		\[
		J_{11}\big|_{y=g(x)} = \bigl[\Phi(x,g(x)) + g(x)\,\Phi_y(x,g(x))\bigr]\,g'(x).
		\]
		Whenever the bracket is positive, the sign of $J_{11}$ still equals
		the sign of $g'(x)$, and Theorem~\ref{thm:general} below applies
		verbatim. For the Crowley--Martin response
		$\Phi = ax/[(1+bx)(1+cy)]$ one finds
		$\Phi + g\Phi_y = \Phi/(1+cg) > 0$, so the bracket is positive on
		the entire nullcline.
	\end{remark}
	
	The predator self--regulation is encoded in $J_{22}$. At a CEP,
	$Q(x^*,y^*)=0$, so
	\begin{equation}\label{eq:J22}
		J_{22} = \frac{\partial}{\partial y}\bigl[y\,Q(x,y)\bigr]\Big|_{P^*}
		= Q(x^*,y^*) + y^*\,\frac{\partial Q}{\partial y}(x^*,y^*)
		= y^*\,\frac{\partial Q}{\partial y}(x^*,y^*).
	\end{equation}
	In all ecologically reasonable models, the predator growth rate $Q$
	decreases with predator density due to intraspecific competition,
	interference, or territoriality. We therefore impose the following
	standing hypothesis.
	
	\begin{definition}
		System~\eqref{eq:gause} is said to satisfy the \emph{predator
			self--damping hypothesis} if
		\[
		\frac{\partial Q}{\partial y}(x,y) \le 0 \quad \text{for all } x>0,\; y>0,
		\]
		and the inequality is strict at every coexistence equilibrium $P^*$.
	\end{definition}
	
	Under this hypothesis, $J_{22} < 0$ at any CEP. We now obtain the
	central spectral decomposition:
	\begin{equation}\label{eq:trace_decomp}
		\boxed{\ \tr(J) = p(x^*)\,g'(x^*) + J_{22}(x^*,y^*)\ },
	\end{equation}
	with $J_{22}<0$. The consequences are immediate.
	
	\begin{theorem}[Geometric localization of Hopf bifurcations]\label{thm:general}
		Let system~\eqref{eq:gause} satisfy $p(x)>0$ on $(0,\infty)$ and the
		predator self--damping hypothesis. If $P^*$ is a CEP where the system
		undergoes a Hopf bifurcation, then the prey nullcline must be strictly
		increasing at $P^*$:
		\[
		g'(x^*) > 0.
		\]
		Consequently, $x^*$ lies in an open interval between two consecutive
		critical points of $g$, on a branch where $g$ is ascending.
	\end{theorem}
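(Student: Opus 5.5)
The argument is a direct sign analysis of the trace decomposition \eqref{eq:trace_decomp}. A Hopf bifurcation at $P^*$ requires $\tr(J)=0$ and $\det(J)>0$; we only need the trace condition. Since $P^*$ is a CEP, it lies on the prey nullcline, so $y^*=g(x^*)$. Lemma~\ref{lem:J11_prop} then gives $J_{11}=p(x^*)\,g'(x^*)$. By \eqref{eq:J22} together with the predator self--damping hypothesis (strict at CEPs), $J_{22}=y^*\,\partial_y Q(x^*,y^*)<0$, because $y^*>0$. Setting the trace to zero yields
\[
p(x^*)\,g'(x^*) = -J_{22}(x^*,y^*) > 0 .
\]
Since $p(x^*)>0$ by hypothesis, we can divide to obtain $g'(x^*)>0$. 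Equivalently, by contraposition: if $g'(x^*)\le 0$, then $\tr(J)\le J_{22}<0$, so the trace cannot vanish and no Hopf bifurcation occurs there. This covers both the critical points ($g'=0$, the ``spectral barrier'') and the descending branches.

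For the \emph{consequently} part, I would use continuity of $g'$, which holds because $\phi$ and $p$ are smooth and $p>0$. Since $g'(x^*)>0$, the set $U=\{x>0: g'(x)>0\}$ is open and contains $x^*$. Let $(a,b)$ be the connected component of $U$ containing $x^*$. On $(a,b)$ the function $g$ is strictly increasing. Each finite endpoint is a boundary point of $U$ in $(0,\infty)$, so by continuity $g'(a)=0$ (respectively $g'(b)=0$); that is, the endpoints are critical points of $g$. No critical point lies strictly between $a$ and $b$, so they are consecutive. If no critical point exists on one side, the interval extends to $0$ or $\infty$, and the statement should be read with that convention.

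The only delicate point, and thus the main obstacle, is this last interpretive step rather than any computation. Two issues need care. First, the endpoint at $0$ or $\infty$ may not be a genuine critical point, which I would handle by the convention just mentioned. Second, the critical set of $g$ may be non-isolated or accumulate, in which case ``consecutive'' must be understood via the connected component of $\{g'>0\}$. I would state it that way to avoid any finiteness assumption on the critical points.
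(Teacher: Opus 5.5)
Your proof is correct and follows the paper's argument: set $\tr(J)=0$ in the decomposition $\tr(J)=p(x^*)g'(x^*)+J_{22}$, use $J_{22}<0$ from the self--damping hypothesis, and divide by $p(x^*)>0$. Your connected--component treatment of the ``consequently'' clause sharpens the paper's one-line remark that $g'$ changes sign only at critical points, and your convention for an endpoint at $0$ or $\infty$ is genuinely needed (e.g.\ in the Bazykin case the ascending branch is $(0,x_{\mathrm{v}})$, where $g'(0)>0$, so $0$ is not a critical point).
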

	\begin{proof}
		From~\eqref{eq:trace_decomp}, $\tr(J)=0$ implies
		$p(x^*)g'(x^*) = -J_{22} > 0$. Since $p(x^*)>0$, we obtain
		$g'(x^*)>0$. The second statement follows because the sign of $g'$
		can change only at critical points.
	\end{proof}
	
	\begin{remark}
		The proof does not require specifying a bifurcation parameter; it holds
		for any parameter values for which a CEP exists and the Hopf condition
		is met. The localization is therefore structurally robust.
	\end{remark}
	
	\begin{corollary}[Bogdanov--Takens localization]\label{cor:BT}
		Under the same hypotheses, every Bogdanov--Takens point (where
		$\tr(J)=\det(J)=0$) is also confined to an ascending branch of the
		prey nullcline.
	\end{corollary}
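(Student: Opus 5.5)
The argument follows the proof of \Cref{thm:general}, except that we use only the trace condition and never need $\det(J)>0$. Let $P^*=(x^*,y^*)$ be a Bogdanov--Takens point, so in particular $P^*$ is a coexistence equilibrium on the prey nullcline $y=g(x)$ with $\tr(J)=0$ and $\det(J)=0$. By \Cref{lem:J11_prop} (or its predator-dependent extension in \Cref{rem:extended_lemma}), the trace decomposition \eqref{eq:trace_decomp} holds at $P^*$:
\[
\tr(J)=p(x^*)\,g'(x^*)+J_{22}(x^*,y^*).
\]
Under the predator self--damping hypothesis, \eqref{eq:J22} gives $J_{22}=y^*\,\partial_yQ(x^*,y^*)<0$.

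Step one is to impose $\tr(J)=0$. This yields $p(x^*)g'(x^*)=-J_{22}>0$. Since $p(x^*)>0$, we conclude $g'(x^*)>0$. The determinant condition is not used for the localization; it only identifies the point as BT rather than Hopf.

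Step two transfers the conclusion to the branch structure exactly as in the second statement of \Cref{thm:general}. The sign of $g'$ can change only at critical points of $g$, and at every critical point $\tr(J)=J_{22}<0$, so such a point cannot be a BT point. Hence $x^*$ lies in an open interval between consecutive critical points on which $g$ is increasing.

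The only potential subtlety is that \Cref{thm:general} is phrased in terms of Hopf points, which require $\det(J)>0$; a BT point fails that requirement. So we should not invoke the theorem as a black box. Instead we repeat its one-line trace argument, which never uses the determinant. It is also worth remarking that $\det(J)=0$ with $J_{11}>0$ forces $J_{12}J_{21}=J_{11}J_{22}<0$. This is consistent with the hypotheses and so raises no contradiction. I do not expect any real obstacle beyond this remark.
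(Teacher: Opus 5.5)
Your proof is correct and is essentially the paper's own argument. The paper likewise observes that the BT condition contains $\tr(J)=0$ and reruns the trace argument of Theorem~\ref{thm:general}: from $p(x^*)g'(x^*)=-J_{22}>0$ and $p>0$ it follows that $g'(x^*)>0$, and $\det(J)$ plays no role. Your caution against invoking the theorem as a black box (a BT point has $\det(J)=0$, not $\det(J)>0$) is exactly why the paper says ``the argument applies identically'' rather than citing the theorem directly.
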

	\begin{proof}
		The BT condition includes $\tr(J)=0$, so the argument of
		Theorem~\ref{thm:general} applies identically.
	\end{proof}
	
	\begin{remark}
		Since the BT point organizes the nearby homoclinic bifurcation curve and
		the fold of limit cycles, the geometric localization constrains not only
		the onset of oscillations but also the more complex codimension--two
		structures that emanate from it.
	\end{remark}
	
	The above theorem unifies and clarifies the three families studied
	below. In each case the specific functional responses determine the
	shape of $g$ and the resulting Hopf loci can be written in closed form,
	but the confinement of oscillatory instabilities to ascending branches
	follows from the general principle.
	
	\section{Quadratic nullcline: Bazykin model}\label{sec:quadratic}
	\subsection{Model and nullcline geometry}
	The Bazykin predator--prey model~\cite{Bazykin1998} with intraspecific
	predator competition is
	\begin{equation}\label{eq:bazykin}
		\begin{aligned}
			\dot{x} &= r\,x\!\left(1-\frac{x}{k}\right) - \frac{a\,x\,y}{x+b},\\[4pt]
			\dot{y} &= e\,\frac{a\,x\,y}{x+b} - d\,y - \sigma\,y^2.
		\end{aligned}
	\end{equation}
	This corresponds to $p(x) = ax/(x+b)$ and
	$Q(x,y) = e a x/(x+b) - d - \sigma y$, which satisfies the self--damping
	hypothesis because $\partial Q/\partial y = -\sigma < 0$.
	
	The prey nullcline is the parabola
	\[
	g(x) = \frac{r(k-x)(b+x)}{a k},
	\]
	with unique maximum at $x_{\mathrm{v}} = (k-b)/2$ ($k>b$). Thus $g'$ is
	positive on $(0,x_{\mathrm{v}})$ and negative on $(x_{\mathrm{v}},\infty)$.
	
	\subsection{Trace structure and localization}
	Lemma~\ref{lem:J11_prop} gives $J_{11}=p(x)g'(x)$. A direct computation
	confirms
	\begin{equation}\label{eq:baz_J11}
		J_{11}\big|_{y=g(x)} = \frac{r\,x\,(k-b-2x)}{k\,(x+b)},
	\end{equation}
	which vanishes precisely at $x_{\mathrm{v}}$. From the predator
	equilibrium condition, $J_{22} = -\sigma y^* < 0$.
	
	\begin{theorem}\label{thm:quadratic}
		In system~\eqref{eq:bazykin} with $k>b>0$, every CEP undergoing Hopf
		bifurcation satisfies $0 < x^* < x_{\mathrm{v}}$.
	\end{theorem}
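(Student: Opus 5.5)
The plan is to deduce the statement directly from Theorem~\ref{thm:general}, after checking its hypotheses for system~\eqref{eq:bazykin}, and then to convert the abstract conclusion $g'(x^*)>0$ into the explicit interval using the parabolic shape of $g$.

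\emph{Hypotheses.} The functional response $p(x)=ax/(x+b)$ is strictly positive for $x>0$ whenever $a,b>0$. The predator growth rate $Q(x,y)=eax/(x+b)-d-\sigma y$ has $\partial Q/\partial y=-\sigma<0$ everywhere. Hence the self--damping hypothesis holds strictly at every CEP, and \eqref{eq:J22} gives $J_{22}=-\sigma y^*<0$. Lemma~\ref{lem:J11_prop} then yields the trace decomposition \eqref{eq:trace_decomp}. As a consistency check I will also use the explicit formula \eqref{eq:baz_J11}, which gives
\[
\tr(J)=\frac{r\,x^*(k-b-2x^*)}{k(x^*+b)}-\sigma y^*.
\]

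\emph{Localization.} At a Hopf point $\tr(J)=0$. Therefore $r x^*(k-b-2x^*)/[k(x^*+b)]=\sigma y^*>0$. Since $x^*>0$, this forces $k-b-2x^*>0$, that is, $x^*<(k-b)/2=x_{\mathrm{v}}$. Together with $x^*>0$ (a CEP has positive coordinates) this gives $0<x^*<x_{\mathrm{v}}$. Equivalently, Theorem~\ref{thm:general} gives $g'(x^*)>0$. Because $g'(x)=r(k-b-2x)/(ak)$ is affine and decreasing with single zero $x_{\mathrm{v}}$, the set $\{x>0: g'(x)>0\}$ is exactly $(0,x_{\mathrm{v}})$ when $k>b$.

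\emph{Where care is needed.} There is no real obstacle. The only points to watch are that the CEP must have $y^*>0$ (so $\sigma y^*$ is strictly positive, not zero) and that the assumption $k>b$ makes $x_{\mathrm{v}}$ positive. Without $k>b$ the ascending branch in $x>0$ would be empty, and no Hopf point could exist at all. I would also remark that $x^*<k$ holds automatically, since $y^*=g(x^*)>0$, but this is not needed for the stated bound.
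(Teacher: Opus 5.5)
Your proposal is correct and follows the paper's route. The paper's proof is the one-line deduction from Theorem~\ref{thm:general} using that $g'(x)=r(k-b-2x)/(ak)$ is positive exactly on $(0,x_{\mathrm{v}})$. You add an explicit check of the hypotheses ($p>0$, $J_{22}=-\sigma y^*<0$) and a direct verification through the formula for $\tr(J)$; these are consistent with the paper and harmless.
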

	\begin{proof}
		Immediate from Theorem~\ref{thm:general}, since $g'(x)>0$ only for
		$x < x_{\mathrm{v}}$.
	\end{proof}
	
	For completeness we record the explicit Hopf locus obtained by solving
	the augmented system $\{f_1=0,\,f_2=0,\,\tr(J)=0\}$:
	\[
	a_0 = \frac{(k_0+2b)^2(k_0+2b+2x_0)\,\sigma}{4\,k_0\,x_0},
	\]
	with $k_0,\,x_0>0$ defined by $k = k_0+b+x_0$ and $x^*=k_0/2$.
	The determinant is positive along this curve, confirming a genuine
	Hopf bifurcation.
	
	\section{Cubic nullcline: Holling type~IV with harvesting}\label{sec:cubic}
	\subsection{Model and nullcline geometry}
	The Leslie--type system with Holling type~IV (group defense) response and
	harvesting is
	\begin{equation}\label{eq:h4}
		\begin{aligned}
			\dot{x} &= x(1-x) - \frac{x\,y}{a+x^2} - h_1\,x,\\[4pt]
			\dot{y} &= y\!\left(\delta - \frac{\beta\,y}{x}\right) - h_2\,y.
		\end{aligned}
	\end{equation}
	Here $p(x)=x/(a+x^2)$ and the predator equation yields
	$Q(x,y) = \delta - \beta y/x - h_2$, with $\partial Q/\partial y = -\beta/x
	<0$, satisfying the self--damping hypothesis. For a suitably chosen
	harvesting parameter $h_1$, the prey nullcline becomes cubic and possesses
	two critical points $x_{\min}<x_{\max}$. Since the leading coefficient of
	$g$ is negative, the nullcline decreases on $(0,x_{\min})$, increases on
	$(x_{\min},x_{\max})$, and decreases again on $(x_{\max},\infty)$; the
	middle interval is the ascending branch.
	
	\subsection{Localization of oscillatory instability}
	\begin{theorem}\label{thm:cubic}
		In system~\eqref{eq:h4} with parameters chosen so that the prey
		nullcline has two positive critical points $x_{\min}<x_{\max}$,
		every CEP undergoing Hopf bifurcation lies on the ascending branch of
		the nullcline:
		\[
		x_{\min} < x^* < x_{\max}.
		\]
	\end{theorem}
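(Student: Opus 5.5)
The plan is to reduce the statement to Theorem~\ref{thm:general}, so we only need to check its hypotheses for system~\eqref{eq:h4} and then read off the sign structure of $g'$.

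\textbf{Step 1: match system~\eqref{eq:h4} to the Gause form~\eqref{eq:gause}.} Write the prey equation as $\dot x = x\phi(x) - y\,p(x)$ with
\[
\phi(x)=1-h_1-x,\qquad p(x)=\frac{x}{a+x^2}.
\]
Since $a>0$, we have $p(x)>0$ on $(0,\infty)$. Write the predator equation as $\dot y = yQ(x,y)$ with $Q(x,y)=\delta-h_2-\beta y/x$. Then $\partial Q/\partial y=-\beta/x<0$ for all $x,y>0$, with $\beta>0$. So the predator self--damping hypothesis holds, strictly at every CEP.

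We should also confirm~\eqref{eq:J22} directly: at a CEP, $J_{22}=y^*Q_y=-\beta y^*/x^*<0$. The Leslie form causes no trouble here, because $Q$ depends on $x$ only through $J_{21}$, which does not enter the trace.

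\textbf{Step 2: apply Theorem~\ref{thm:general}.} At any CEP where a Hopf bifurcation occurs, $\tr(J)=0$. Decomposition~\eqref{eq:trace_decomp} then gives $p(x^*)g'(x^*)=\beta y^*/x^*>0$, hence $g'(x^*)>0$.

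\textbf{Step 3: locate the set where $g'>0$.} Compute the nullcline explicitly:
\[
g(x)=(1-h_1-x)(a+x^2)=-x^3+(1-h_1)x^2-ax+a(1-h_1),
\]
which is a cubic with leading coefficient $-1$. Then
\[
g'(x)=-3x^2+2(1-h_1)x-a
\]
is a downward-opening quadratic. Under the hypothesis that $g$ has two positive critical points $x_{\min}<x_{\max}$, these are exactly the roots of $g'$. Therefore $g'>0$ exactly on $(x_{\min},x_{\max})$ and $g'<0$ on $(0,x_{\min})\cup(x_{\max},\infty)$. Combining this with Step 2 gives $x_{\min}<x^*<x_{\max}$.

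\textbf{Main obstacle.} The only point needing care is Step 3: we must use the exact form of $g$ rather than the informal phrase ``cubic for suitable $h_1$''. In particular, $x_{\min}$ and $x_{\max}$ must be the only zeros of $g'$ on $(0,\infty)$, so that no further ascending branch exists. A quadratic $g'$ has at most two roots, which settles this.

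It is also worth noting that the endpoints are excluded by strictness. At $x^*=x_{\min}$ or $x^*=x_{\max}$ we would get $\tr(J)=J_{22}<0$, so no Hopf bifurcation can occur there. Finally, $g(x^*)=y^*>0$ forces $x^*<1-h_1$, but this is not needed for the conclusion.
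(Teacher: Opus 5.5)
Your proposal is correct and follows the same route as the paper. Like the paper, you verify the hypotheses of Theorem~\ref{thm:general} and then use that $g$ is a cubic with negative leading coefficient, so $g'$ is a downward-opening quadratic that is positive exactly on $(x_{\min},x_{\max})$. Your explicit computation $g(x)=(1-h_1-x)(a+x^2)$ and your checks that $p>0$ and $\partial Q/\partial y=-\beta/x<0$ make explicit what the paper leaves implicit.
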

	\begin{proof}
		Immediate from Theorem~\ref{thm:general}. Since the leading
		coefficient of the cubic $g$ is negative, $g'(x) > 0$ precisely on
		the middle interval $(x_{\min},x_{\max})$, and $g'(x)<0$ on
		$(0,x_{\min}) \cup (x_{\max},\infty)$. By Theorem~\ref{thm:general}
		the Hopf locus is confined to the ascending branch, hence to
		$(x_{\min},x_{\max})$.
	\end{proof}
	
	\begin{corollary}\label{cor:cubic_middle}
		Under the standard parametrization
		$h_1 = h_{10}/(3+h_{10})$, $a = 9/(4(3+h_{10})^2)$ with $h_{10}>0$,
		the critical points are $x_{\min}=1/[2(3+h_{10})]$ and
		$x_{\max}=3/[2(3+h_{10})]$, and the entire ascending branch
		$(x_{\min},x_{\max})$ is admissible: for each
		$x^*\in(x_{\min},x_{\max})$ there exist positive parameters
		$(y_0,\delta_0,\beta_0)$ realizing a Hopf bifurcation at $x^*$.
	\end{corollary}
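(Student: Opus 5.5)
The plan is to prove the corollary in two stages. First, compute the critical points of the prey nullcline under the given parametrization. Second, for an arbitrary $x^*$ in the ascending branch, construct positive parameters so that the Hopf conditions $\tr(J)=0$ and $\det(J)>0$ hold at a CEP with prey coordinate $x^*$. Throughout I write $m=3+h_{10}$, so that $1-h_1=3/m$ and $a=9/(4m^2)$.

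\emph{Critical points.} From \eqref{eq:h4} the prey nullcline is $g(x)=(a+x^2)(c-x)$ with $c=1-h_1=3/m$. Differentiating gives
$g'(x)=-3x^2+2cx-a=-3x^2+6x/m-9/(4m^2)$. The quadratic formula gives the roots $x=(6/m\pm 3/m)/6$, that is, $x_{\min}=1/(2m)$ and $x_{\max}=3/(2m)$. Since the leading coefficient of $g'$ is negative, $g'>0$ exactly on $(x_{\min},x_{\max})$. This matches Theorem~\ref{thm:cubic}, so that theorem gives the necessity part of the corollary.

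\emph{Realizability.} Fix $x^*\in(x_{\min},x_{\max})$ and set $y_0=g(x^*)$. This is positive because $x^*<3/(2m)<c$. I treat $h_2\ge 0$ as given and solve for $\beta$ and $\delta$. Using $Q=\delta-h_2-\beta y/x$, at the CEP we have the following entries:
\begin{itemize}
\item $J_{22}=-\beta y_0/x^*$;
\item $J_{11}=p(x^*)g'(x^*)>0$, by Lemma~\ref{lem:J11_prop};
\item $J_{12}=-p(x^*)$;
\item $J_{21}=\beta y_0^2/x^{*2}$.
\end{itemize}
The trace condition $\tr(J)=0$ forces $\beta_0=x^*p(x^*)g'(x^*)/y_0>0$. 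The predator equilibrium condition then gives $\delta_0=h_2+\beta_0y_0/x^*=h_2+p(x^*)g'(x^*)>0$. Substituting, the determinant simplifies to
\[
\det(J)=p(x^*)^2\,g'(x^*)\Bigl(\frac{g(x^*)}{x^*}-g'(x^*)\Bigr).
\]
Since $g'(x^*)>0$, it remains to show $g(x)/x>g'(x)$ on the ascending branch. Geometrically, this says the chord from the origin is steeper than the tangent.

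\emph{Main step.} The step that actually needs checking is this inequality. Multiplying $g(x)/x-g'(x)$ by $x$ gives the cubic $h(x)=ac-cx^2+2x^3$. Its derivative is $h'(x)=2x(3x-c)$, so on $x>0$ its minimum is at $x=c/3=1/m$, which lies inside $(x_{\min},x_{\max})$. At that point
\[
h(1/m)=\frac{27}{4m^3}-\frac{3}{m^3}+\frac{2}{m^3}=\frac{23}{4m^3}>0 .
\]
Hence $\det(J)>0$ on the whole ascending branch, and every such $x^*$ is realized by positive $(y_0,\delta_0,\beta_0)$.

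If a genuine Hopf bifurcation is wanted rather than only the linear conditions, I would also verify transversality. For this I would use $\delta$ as the unfolding parameter: vary $\delta$ with $\beta_0$ fixed, follow $x^*(\delta)$ along the nullcline, and check that $d\,\tr(J)/d\delta\neq 0$. Because $J_{11}=p\,g'$ and $J_{22}$ vary monotonically with $x^*$ on the branch, I expect this derivative to be nonzero generically.
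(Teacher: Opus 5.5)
Your proof is correct and follows the same route as the paper: solve $f_1=f_2=\tr(J)=0$ for $(y,\delta,\beta)$ at fixed $x^*$, then check positivity and $\det(J)>0$, and you carry out explicitly, via $\det(J)=p^2g'\,(g/x-g')$ and $ac-cx^2+2x^3\ge 23/(4m^3)>0$, the verification the paper only asserts. The one inaccuracy is in your optional transversality remark: $J_{11}=p\,g'$ vanishes at both $x_{\min}$ and $x_{\max}$, so it is not monotone on the branch, and with $\beta=\beta_0(x^*)$ fixed, $d\,\tr(J)/d\delta$ is a nonzero multiple of $\beta_0'(x^*)$, which vanishes at the interior maximum of $\beta_0(x)=x\,p(x)g'(x)/g(x)$; this does not affect the corollary, which only needs $\tr(J)=0$ and $\det(J)>0$, and at that point you can unfold in $\beta$ instead of $\delta$ to get transversality.
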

	\begin{proof}
		Solving $\{f_1=0,\,f_2=0,\,\tr(J)=0\}$ for $(y,\delta,\beta)$ as
		functions of $x$ gives explicit expressions
		$y_0(x),\delta_0(x),\beta_0(x)$. The positivity conditions $y_0>0$,
		$\delta_0>0$, $\beta_0>0$, together with $\det(J)>0$, hold
		simultaneously for all $x\in(x_{\min},x_{\max})$ under the given
		parametrization. At the critical points $x_{\min}$ and $x_{\max}$,
		$g'=0$ forces $\tr(J)=J_{22}<0$, so the Hopf condition fails at the
		boundaries, consistent with Theorem~\ref{thm:cubic}.
	\end{proof}
	
	\begin{remark}
		Theorem~\ref{thm:cubic} is purely geometric: it confines the Hopf
		locus to the ascending branch for any admissible parameters.
		Corollary~\ref{cor:cubic_middle} adds that, under the standard
		parametrization, the entire ascending branch is realized, so the
		geometric bound is sharp.
	\end{remark}
	
	\section{Rational nullcline: Crowley--Martin model}\label{sec:crowley}
	\subsection{Model and nullcline geometry}
	The Crowley--Martin system~\cite{CrowleyMartin1989} incorporates
	predator interference:
	\begin{equation}\label{eq:cm}
		\begin{aligned}
			\dot{x} &= \rho\,x\!\left(1-\frac{x}{k}\right)
			- \frac{a\,x\,y}{(1+bx)(1+cy)},\\[4pt]
			\dot{y} &= \frac{\gamma\,a\,x\,y}{(1+bx)(1+cy)} - d\,y.
		\end{aligned}
	\end{equation}
	We identify $p(x) = a x/[(1+bx)(1+cg(x))]$ (on the nullcline) and
	$Q(x,y) = \gamma a x/[(1+bx)(1+cy)] - d$. Clearly
	$\partial Q/\partial y = -\gamma a c x/[(1+bx)(1+cy)^2] \le 0$,
	with strict inequality when $c>0$. The prey nullcline is rational:
	\[
	g(x) = \frac{h(x)}{a - c\,h(x)},\qquad
	h(x) \coloneqq \rho\!\left(1-\frac{x}{k}\right)(1+bx).
	\]
	Its derivative is $g'(x) = a\,h'(x)/(a-ch)^2$, so the critical points
	of $g$ and $h$ coincide. Hence the unique vertex is at
	$x_{\mathrm{v}} = (bk-1)/(2b)$, independent of the interference
	parameter $c$.
	
	\subsection{Localization and the Hopf bifurcation locus}
	\begin{theorem}\label{thm:cm}
		In system~\eqref{eq:cm} with $c>0$ and $bk>1$, every CEP undergoing
		Hopf bifurcation satisfies $0 < x^* < x_{\mathrm{v}}$. The critical
		interference parameter at a Hopf point with prey coordinate $x^*$ is
		\begin{equation}\label{eq:cm_c0}
			c_0(x) = \frac{a\,b\,x\,(k_0-2bx)}{d\,(1+bx)^2\,(1+k_0-bx)},
		\end{equation}
		where $k_0 = bk-1$, which is positive if and only if
		$x < x_{\mathrm{v}} = k_0/(2b)$.
	\end{theorem}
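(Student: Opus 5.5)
The plan has two parts. First comes the localization claim, which follows from the general theory. Then comes the explicit Hopf locus, which I would obtain by solving the augmented system $\{f_1=0,\,f_2=0,\,\tr(J)=0\}$ as was done for the Bazykin model.

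\emph{Localization.} System~\eqref{eq:cm} has a predator--dependent response $\Phi(x,y)=ax/[(1+bx)(1+cy)]$, so I would invoke Remark~\ref{rem:extended_lemma} rather than Lemma~\ref{lem:J11_prop} directly. That remark gives $J_{11}|_{y=g(x)}=\frac{\Phi}{1+cg}\,g'(x)$, and the bracket is positive. For the predator equation, \eqref{eq:J22} gives $J_{22}=y^*Q_y(x^*,y^*)$. At a CEP the predator equilibrium condition $\gamma\Phi=d$ then simplifies this to
\[
J_{22}=-\frac{d\,c\,y^*}{1+cy^*},
\]
which is strictly negative for $c>0$. The argument of Theorem~\ref{thm:general} therefore applies verbatim: $\tr(J)=0$ forces $g'(x^*)>0$. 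Next I would use $g'=a h'/(a-ch)^2$ and compute
\[
h'(x)=\frac{\rho\,(bk-1-2bx)}{k}=\frac{\rho\,(k_0-2bx)}{k}.
\]
This is positive exactly when $x<x_{\mathrm v}=k_0/(2b)$, which gives $0<x^*<x_{\mathrm v}$.

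\emph{Explicit locus.} I would compute the trace directly at a CEP using the two equilibrium relations. The prey relation is $a y/[(1+bx)(1+cy)]=\rho(1-x/k)$, and the predator relation is $1+cy=\gamma a x/[d(1+bx)]$. Substituting the prey relation into $J_{11}=\rho(1-2x/k)-a y/[(1+bx)^2(1+cy)]$ gives
\[
J_{11}=\frac{\rho\,x\,(k_0-2bx)}{k(1+bx)},
\]
which is consistent with the factor $k_0-2bx$ in \eqref{eq:cm_c0}. With $J_{22}$ as above, I would solve the three conditions $f_1=f_2=0$ and $\tr(J)=0$ simultaneously for $y$, $c$ and one auxiliary parameter, all as functions of $x$. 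The prey relation gives $y$ explicitly once $1+cy$ is known from the predator relation, and $\tr(J)=0$ then becomes linear in $c$. The expected result is \eqref{eq:cm_c0}, in which $1+k_0-bx=b(k-x)$ appears.

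\emph{Main obstacle.} The stated $c_0$ involves only $a,b,d,k_0,x$. So the hardest step will be the elimination itself: choosing which parameter to eliminate, presumably $\gamma$ or $\rho$ through the equilibrium relations, so that these drop out exactly. I would first try eliminating $\gamma$ via the predator relation and $y$ via the prey relation, then isolate $c$ from $\tr(J)=0$. If that does not reduce to \eqref{eq:cm_c0}, I would instead adopt the normalization implicit in the paper's parametrization, namely $k=(k_0+1)/b$ together with a scaling of $\rho$ or $\gamma$.

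\emph{Sign of $c_0$.} Once \eqref{eq:cm_c0} is in hand, the sign claim is immediate. Every factor except $k_0-2bx$ is positive for $0<x<k$: this holds for $a,b,d,x$ and $(1+bx)^2$, and also for $1+k_0-bx=b(k-x)$. Hence $c_0>0$ if and only if $x<x_{\mathrm v}$, in agreement with the localization. Finally, I would check that $\det(J)=-J_{12}J_{21}+J_{11}J_{22}$ is positive along the locus, which confirms a genuine Hopf point.
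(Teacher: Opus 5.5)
This is correct and follows the paper's route: localization through Theorem~\ref{thm:general} via Remark~\ref{rem:extended_lemma}, then $\tr(J)=0$ combined with the equilibrium relations. The elimination you flag as the main obstacle closes at once with no normalization, because the prey relation gives $y^*/(1+cy^*)=h(x^*)/a$, so $J_{22}=-dc\,h(x^*)/a$, and then $c=aJ_{11}/(d\,h(x^*))$ with $J_{11}=x^*h'(x^*)/(1+bx^*)$ is exactly \eqref{eq:cm_c0}, with $\rho$ cancelling.

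Your restriction to $0<x<k$ (automatic at a CEP, since $\rho(1-x^*/k)>0$) is genuinely needed for the ``if and only if''. The closing determinant check, by contrast, is unnecessary for the statement and does not hold automatically: on $\tr(J)=0$ one has $\det(J)=\frac{dy^*}{(1+cy^*)^2}\bigl[\frac{a}{(1+bx^*)^2}-dc^2y^*\bigr]$, which is negative for large $y^*$.
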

	\begin{proof}
		From Theorem~\ref{thm:general}, Hopf requires $g'(x^*)>0$, which
		holds only for $x^*<x_{\mathrm{v}}$. Setting $\tr(J)=0$ and using the
		nullcline relations yields~\eqref{eq:cm_c0}. The numerator contains
		$k_0-2bx$, positive exactly when $x<x_{\mathrm{v}}$.
	\end{proof}
	
	\begin{remark}[Rosenzweig--MacArthur limit]
		As $c \to 0^+$, the expression~\eqref{eq:cm_c0} forces
		$k_0-2bx \to 0$, i.e., $x^* \to x_{\mathrm{v}}$: the Hopf point
		converges to the nullcline vertex, recovering the classical criterion.
		The interference parameter $c$ thus controls how deeply the oscillatory
		instability penetrates into the ascending branch. Larger $c$ pushes the
		Hopf point further from the vertex, expanding the stable region and
		compressing the oscillatory domain.
	\end{remark}
	
	\section{Discrete--time extension: Neimark--Sacker localization}\label{sec:discrete}
	We now show that the nullcline geometry provides a sharp spectral
	separation between Hopf and Neimark--Sacker (NS) bifurcations in the
	forward Euler discretization. Consider the map
	\[
	G(x,y) = \bigl(x + \tau f_1(x,y),\; y + \tau f_2(x,y)\bigr),\qquad
	\tau > 0,
	\]
	whose fixed points coincide with equilibria of the continuous system,
	so the prey nullcline is unchanged. The map Jacobian is
	$J^G = I + \tau J$, where $J$ is the continuous Jacobian.
	
	The NS bifurcation condition for a planar map is that two complex
	conjugate eigenvalues cross the unit circle, which occurs generically
	when $\det(J^G)=1$ and $|\tr(J^G)|<2$. A direct expansion gives an
	\emph{exact} identity:
	\begin{equation}\label{eq:det_identity}
		\det(J^G) - 1 = \tau\bigl[\tr(J) + \tau\,\det(J)\bigr],
	\end{equation}
	obtained by computing
	$(1+\tau J_{11})(1+\tau J_{22}) - \tau^2 J_{12}J_{21} - 1$ and
	factoring. The identity translates the NS condition into
	\begin{equation}\label{eq:ns_clean}
		\tr(J) + \tau\det(J) = 0,
	\end{equation}
	which, combined with the trace decomposition~\eqref{eq:trace_decomp},
	makes the role of the nullcline geometry transparent.
	
	Two consequences follow from~\eqref{eq:det_identity} without further
	assumptions.
	
	\begin{theorem}[Spectral separation of Hopf and Neimark--Sacker
		loci]\label{thm:discrete_separation}
		For the forward Euler map of any planar system, the continuous Hopf
		locus is strictly disjoint from the Neimark--Sacker locus whenever
		$\tau > 0$ and $\det(J) > 0$. Quantitatively,
		\[
		\det(J^G)\big|_{\tr(J)=0} = 1 + \tau^2 \det(J) > 1,
		\]
		so the continuous Hopf curve lies strictly outside the unit circle
		in the spectrum of the map, separated from the NS locus by a margin
		proportional to $\tau^2 \det(J)$.
	\end{theorem}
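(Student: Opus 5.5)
The plan is to work directly from the exact identity~\eqref{eq:det_identity}, treating it as an algebraic fact about $2\times 2$ matrices rather than anything specific to predator--prey structure. First I will verify it by expanding $\det(I+\tau J)=(1+\tau J_{11})(1+\tau J_{22})-\tau^2 J_{12}J_{21}$. This equals $1+\tau(J_{11}+J_{22})+\tau^2(J_{11}J_{22}-J_{12}J_{21})$, that is, $1+\tau\tr(J)+\tau^2\det(J)$. The identity holds for any real planar Jacobian, which is why the theorem is stated for the forward Euler map of \emph{any} planar system.

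Next, I will evaluate on the continuous Hopf locus, where $\tr(J)=0$ and $\det(J)>0$. Substituting gives $\det(J^G)=1+\tau^2\det(J)$, which is strictly greater than $1$ because $\tau>0$ and $\det(J)>0$. This is the quantitative claim.

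To get disjointness, I will recall that the NS condition requires $\det(J^G)=1$, equivalently~\eqref{eq:ns_clean}, $\tr(J)=-\tau\det(J)$. A point lying on both loci would need $\tr(J)=0$ and $\tr(J)=-\tau\det(J)<0$ at the same time, which is impossible.

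To interpret the result spectrally, I will check that at a Hopf point the eigenvalues of $J^G$ really are a complex-conjugate pair with modulus $\sqrt{1+\tau^2\det(J)}>1$. The eigenvalues of $J$ are $\pm i\omega$ with $\omega^2=\det(J)$, so those of $J^G$ are $1\pm i\tau\omega$, with squared modulus $1+\tau^2\omega^2$. Hence the Hopf curve sits strictly outside the unit circle in the map's spectrum, and the gap in $\det(J^G)$ from the NS value $1$ is exactly $\tau^2\det(J)$, which is the ``margin'' asserted.

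No step here is a real obstacle. The only point needing care is the word ``margin''. I will take it to mean the difference $\det(J^G)-1$ measured in the determinant (equivalently, squared-modulus) coordinate, not a distance in parameter or state space. A parameter-space distance would require transversality and is deferred to the later $O(\tau)$ tracking statements.
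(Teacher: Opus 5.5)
Your proposal is correct and takes the paper's approach: substitute $\tr(J)=0$ into the identity $\det(J^G)-1=\tau[\tr(J)+\tau\det(J)]$ to get $\det(J^G)=1+\tau^2\det(J)>1$, which rules out the NS condition $\det(J^G)=1$. Your extra check that the map eigenvalues are $1\pm i\tau\omega$, with modulus $\sqrt{1+\tau^2\det(J)}$, is a worthwhile refinement the paper omits: $\det(J^G)>1$ alone only bounds the product of the moduli and does not by itself place both eigenvalues outside the unit circle.
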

	\begin{proof}
		Immediate from~\eqref{eq:det_identity}: at any point with
		$\tr(J)=0$, we obtain $\det(J^G) - 1 = \tau^2 \det(J) > 0$.
	\end{proof}
	
	\begin{theorem}[Sign condition for Neimark--Sacker]\label{thm:discrete}
		For the forward Euler map of the Gause system~\eqref{eq:gause}
		satisfying $p(x)>0$, the predator self--damping hypothesis, and the
		\emph{prey-positive coupling} hypothesis
		\begin{equation}\label{eq:H_pos}
			\partial Q/\partial x(x^*,y^*) > 0 \qquad (\text{equivalently, } J_{21}>0),
		\end{equation}
		every Neimark--Sacker bifurcation at a coexistence fixed point
		satisfies $\tr(J) < 0$.
		In terms of the nullcline geometry, this is equivalent to
		\begin{equation}\label{eq:NS_geom}
			p(x^*)\,g'(x^*) < |J_{22}(x^*,y^*)|.
		\end{equation}
		Consequently, on any descending branch ($g'<0$), the NS condition is
		geometrically unobstructed; on an ascending branch ($g'>0$), the NS
		condition is admissible only in the regime $p\,g' < |J_{22}|$, i.e.,
		when the destabilizing prey contribution is dominated by the
		predator self--damping.
	\end{theorem}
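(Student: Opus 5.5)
The plan is to derive the sign of $\tr(J)$ at a Neimark--Sacker point directly from the identity~\eqref{eq:ns_clean}. That identity says $\tr(J)=-\tau\det(J)$ on the NS locus. Since $\tau>0$, it suffices to show $\det(J)>0$ at every coexistence fixed point under the stated hypotheses. The geometric reformulation then follows from the trace decomposition~\eqref{eq:trace_decomp}.

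First I compute the Jacobian entries at a CEP of~\eqref{eq:gause}:
\[
J_{12}=-p(x^*),\qquad J_{21}=y^*\,\partial_x Q(x^*,y^*),\qquad J_{22}=y^*\,\partial_y Q(x^*,y^*),
\]
the last being~\eqref{eq:J22}. The hypotheses $p>0$ and~\eqref{eq:H_pos} give $J_{12}<0<J_{21}$, hence $-J_{12}J_{21}>0$. This also justifies the stated equivalence $\partial_xQ>0\iff J_{21}>0$, because $y^*>0$.

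The main obstacle is the remaining term $J_{11}J_{22}$ in $\det(J)=J_{11}J_{22}-J_{12}J_{21}$. We have $J_{22}<0$, but $J_{11}=p\,g'$ may be positive on an ascending branch. In that case $J_{11}J_{22}<0$, and the sign of $\det(J)$ is not automatic from the hypotheses as stated. I would handle this in two ways.

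\begin{enumerate}[label=(\alph*)]
\item If $g'(x^*)\le 0$, then $J_{11}J_{22}\ge 0$, so $\det(J)>0$ holds outright.
\item If $g'(x^*)>0$, I would use the NS condition $\det(J^G)=1$ together with $|\tr(J^G)|<2$. The eigenvalues of $J^G=I+\tau J$ are then $e^{\pm i\theta}$ with $\theta\neq 0,\pi$. Those of $J$ are $\mu_\pm=(e^{\pm i\theta}-1)/\tau$, a non-real conjugate pair with nonzero modulus. Hence $\det(J)=|\mu_\pm|^2=|e^{i\theta}-1|^2/\tau^2>0$.
\end{enumerate}

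Argument (b) uses only the spectral definition of NS, so in fact it gives $\det(J)>0$ in every case. The coupling hypothesis~\eqref{eq:H_pos} then serves mainly to guarantee that complex eigenvalues are possible at all, since $J_{12}J_{21}<0$. I will present (b) as the core argument and note (a) as the geometric reading.

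With $\det(J)>0$, the identity $\tr(J)=-\tau\det(J)$ gives $\tr(J)<0$. Inserting~\eqref{eq:trace_decomp} and $J_{22}=-|J_{22}|$, the inequality $\tr(J)<0$ is equivalent to $p(x^*)g'(x^*)<|J_{22}(x^*,y^*)|$, which is~\eqref{eq:NS_geom}. The concluding dichotomy is immediate from this inequality. When $g'<0$, the left side is negative while $|J_{22}|>0$ by self-damping, so the inequality imposes no restriction. When $g'>0$, it is exactly the stated dominance regime.

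Written out, the final step is
\[
\tr(J)=-\tau\det(J)=-\frac{|e^{i\theta}-1|^2}{\tau}<0 .
\]
This also quantifies how far the NS point sits below the Hopf threshold $\tr(J)=0$, consistent with Theorem~\ref{thm:discrete_separation}.
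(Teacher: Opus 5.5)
Your proof is correct and follows the same chain as the paper's: $\det(J)>0$, hence $\tr(J)=-\tau\det(J)<0$ by \eqref{eq:ns_clean}, then the rearrangement through \eqref{eq:trace_decomp} and the branch dichotomy. The only difference is that you obtain $\det(J)>0$ directly from the NS eigenvalues $e^{\pm i\theta}$ of $J^G$, whereas the paper cites $J_{12}J_{21}<0$ together with the qualifier ``hyperbolic of focus type''; your version makes explicit that it is this focus-type spectrum, forced by NS, that fixes the sign when $g'>0$, and since the discriminant of $J$ is $(J_{11}-J_{22})^2+4J_{12}J_{21}$, that spectrum already forces $J_{21}>0$, so \eqref{eq:H_pos} is implied rather than needed.
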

	\begin{proof}
		Under the prey--positive coupling hypothesis, $J_{12}J_{21}<0$ (since
		$J_{12}=-p(x)<0$), so $\det(J) = J_{11}J_{22} - J_{12}J_{21} > 0$ at
		any CEP whose linearization is hyperbolic of focus type. The NS
		condition~\eqref{eq:ns_clean} becomes $\tr(J) = -\tau\det(J)$, which
		is strictly negative for $\tau>0$ and $\det(J)>0$. From the trace
		decomposition $\tr(J) = p(x^*)g'(x^*) + J_{22}$, the inequality
		$\tr(J)<0$ rearranges to~\eqref{eq:NS_geom}. On a descending branch,
		$g'<0$ makes the left--hand side negative, so~\eqref{eq:NS_geom}
		holds automatically. On an ascending branch, $g'>0$ makes the
		left--hand side positive, and~\eqref{eq:NS_geom} becomes a genuine
		restriction.
	\end{proof}
	
	\begin{remark}[Where the Neimark--Sacker locus actually lies]%
		\label{rem:descending_natural}
		Inequality~\eqref{eq:NS_geom} shows that the location of the NS locus
		depends on the step size $\tau$. Writing the NS condition as
		$p(x^*)g'(x^*) = |J_{22}| - \tau\det(J)$, two regimes appear:
		\begin{itemize}[leftmargin=2em,nosep]
			\item \textbf{Small $\tau$ (faithful discretization).} For
			$\tau < |J_{22}|/\det(J)$, the right-hand side is positive, so
			$g'(x^*) > 0$: the NS point lies on the \emph{ascending} branch,
			$O(\tau)$--close to the continuous Hopf locus. This is the expected
			behavior of an Euler discretization of a Hopf bifurcation, and is
			consistent with the spectral separation of
			Theorem~\ref{thm:discrete_separation}: the NS and Hopf loci are
			distinct but adjacent, separated by a margin of order $\tau$.
			\item \textbf{Large $\tau$ (coarse discretization).} For
			$\tau > |J_{22}|/\det(J)$, the right--hand side turns negative and
			$g'(x^*) < 0$: the NS point migrates to the \emph{descending}
			branch. This regime is a genuinely discrete phenomenon with no
			continuous counterpart, reflecting the failure of the coarse Euler
			map to track the underlying flow.
		\end{itemize}
		Thus the nullcline geometry organizes the discrete bifurcation as
		well, but through a \emph{$\tau$-dependent} mechanism rather than a
		fixed branch assignment: the prey contribution $p(x^*)g'(x^*)$ must
		exactly balance $|J_{22}| - \tau\det(J)$, and the sign of this balance
		determines the branch.
	\end{remark}
	
	The relationship between the continuous and discrete bifurcations is
	summarized in \Cref{fig:architecture}: the nullcline vertex organizes
	the continuous Hopf locus (ascending branch, $\tr(J)=0$), while the
	discrete Neimark--Sacker locus tracks it at an $O(\tau)$ distance for
	small step size and migrates across the vertex only in the coarse--step
	regime $\tau > |J_{22}|/\det(J)$. The spectral mechanism shifts from a
	trace condition in continuous time to the determinant
	identity~\eqref{eq:det_identity} in discrete time, yet the geometric
	organizing centre remains the critical points of the nullcline. The
	identity additionally yields the strict spectral separation of
	\Cref{thm:discrete_separation}: the continuous Hopf locus is itself
	excluded from the NS locus by a margin proportional to
	$\tau^2\det(J)$, so the two curves, though adjacent, never coincide.
	
	\section{Geometric refinements: curvature and saturation}\label{sec:geometry}
	The localization principle is controlled by the first derivative
	$g'(x)$. The second derivative, encoded in the curvature
	\[
	\kappa(x) = \frac{g''(x)}{\bigl(1+g'(x)^2\bigr)^{3/2}},
	\]
	offers a complementary geometric descriptor, though it does not
	determine the bifurcation condition. In the three families studied, the
	nullcline curvature exhibits a strong correlation with the spectral
	barriers.
	\begin{itemize}
		\item \textbf{Bazykin model.} The nullcline is a concave parabola
		($g''<0$). The curvature $|\kappa(x)|$ is maximal at the vertex
		$x_{\mathrm{v}}$, precisely where $g'=0$ and the spectral barrier
		lies. The extremum of curvature and the barrier coincide.
		\item \textbf{Holling type~IV (cubic).} The nullcline has a linear
		second derivative, hence a single inflection point. Direct computation
		shows that this inflection point is exactly the midpoint of the two
		critical points,
		$x_{\mathrm{infl}} = \tfrac{1}{2}(x_{\min}+x_{\max})$.
		Thus the convexity transition is centred within the admissible Hopf
		interval.
		\item \textbf{Crowley--Martin.} Differentiating the rational nullcline
		gives
		\[
		g''(x) = \frac{a\bigl[h''(x)\,(a-ch) + 2c\,h'(x)^2\bigr]}{(a-ch)^3}.
		\]
		Here $h$ is a concave parabola ($h''<0$). When $c=0$, the nullcline is
		concave everywhere. For $c>0$, the positive term $2c\,h'(x)^2$ can
		overcome the negative term, creating an inflection point that deforms
		the nullcline. Thus predator interference not only shifts the Hopf
		locus but can qualitatively reshape the nullcline, introducing a
		convex region absent in the non--interfering model.
	\end{itemize}
	These observations indicate that the saturation--inducing functional
	responses (Holling type~II, III, IV, Crowley--Martin) are responsible
	for the hump--shaped nullclines that create spectral barriers. The
	curvature reflects how the density--dependent saturation deforms the
	nullcline, but the localization of oscillatory instabilities remains
	governed by the first--order barrier $g'=0$.
	
	\section{Discussion}\label{sec:conclusions}
	We have shown that the critical structure of the prey nullcline imposes
	geometric constraints on where oscillatory instabilities can arise in a
	broad class of planar predator--prey systems. The Hopf bifurcation is
	confined to ascending branches of the nullcline; the Bogdanov--Takens
	point, and thus the homoclinic and limit--cycle structures it organizes,
	share the same confinement. The mechanism is algebraic: the Jacobian
	entry $J_{11}$ is proportional to $g'(x)$, independent of the
	bifurcation parameter, while $J_{22}<0$ at any coexistence equilibrium.
	The critical points of $g$ therefore form spectral barriers that
	partition the state space into regions where oscillatory dynamics can
	and cannot occur.
	
	The same nullcline geometry organizes discrete--time dynamics: for the
	forward Euler discretization the Neimark--Sacker locus is strictly
	disjoint from the continuous Hopf locus yet adjacent to it on the
	ascending branch, separated by a margin of order $\tau$, and crosses
	onto the descending branch only when the step size is large. The
	functional responses that model predator saturation and interference
	not only restrict the efficiency of consumption but also carve the
	geometric landscape on which bifurcations unfold.
	
	\subsection{A mechanistic reading of the spectral barrier}%
	\label{subsec:mechanistic}
	The trace decomposition~\eqref{eq:trace_decomp},
	\[
	\tr(J) = \underbrace{p(x^*)\,g'(x^*)}_{J_{11}} + \underbrace{J_{22}(x^*,y^*)}_{<\,0},
	\]
	admits an interpretation that connects the algebraic structure of the
	theorem with the biological content of the model. The predator equation
	carries the dissipative dynamics of the system through $J_{22}<0$, which
	encodes intraspecific competition, interference, or other density--dependent
	regulation. The prey equation, by contrast, is the component that injects
	biomass into the system and sustains its effective growth; the diagonal
	Jacobian entry $J_{11}$ associated with this equation is precisely the
	term that can supply the positive contribution needed to bring the trace
	to zero. Lemma~\ref{lem:J11_prop} shows that this contribution is
	mediated entirely by the nullcline slope $g'(x)$: positive on ascending
	branches, negative on descending branches, and vanishing at critical
	points.
	
	Read in this way, the spectral barrier acquires a transparent dynamical
	meaning. On a descending branch ($g'<0$), the prey equation no longer
	supplies a positive contribution to the trace, and the predator
	dissipation cannot be compensated: the equilibrium is necessarily
	stable. On an ascending branch ($g'>0$), the prey equation can supply
	such a contribution, and if it is large enough to match $|J_{22}|$ the
	trace vanishes and a Hopf bifurcation occurs. At a critical point of
	the nullcline ($g'=0$), the prey contribution vanishes exactly, the
	barrier appears, and the oscillatory transition is forbidden. The
	vertex of the prey nullcline is therefore not an arbitrary geometric
	feature: it is the boundary at which the prey equation ceases to be
	able to compensate the predator dissipation.
	
	From this perspective, the classical Rosenzweig--MacArthur
	criterion~\cite{Rosenzweig1963} appears as the limiting case in which
	the Hopf bifurcation coincides exactly with the spectral barrier. In
	the Rosenzweig--MacArthur model there is no predator interference
	($c=0$) and no intraspecific predator competition ($\sigma=0$); the
	only mechanism of predator self--regulation is the linear mortality
	term, so $J_{22}$ at the coexistence equilibrium equals zero. The
	trace condition $\tr(J)=0$ then reduces to $J_{11}=0$, which on the
	nullcline is equivalent to $g'(x^*)=0$, i.e., $x^*$ coincides with the
	vertex. The classical criterion is recovered as the degenerate case in
	which the spectral barrier and the Hopf locus collapse onto the same
	point. In every other model considered here, the additional predator
	self--regulation produces $J_{22}<0$, and the Hopf locus separates from
	the vertex: the barrier remains at $g'=0$ but the bifurcation moves
	into the interior of the ascending branch, where $g'>0$ is large
	enough to balance $|J_{22}|$.
	
	This reorganization clarifies the role of the Rosenzweig--MacArthur
	vertex without contradicting it. The vertex is not a magic point;
	rather, it is the extremal boundary of an admissible spectral region
	whose width is determined by the strength of predator self--regulation.
	The geometric localization theorem extends the classical criterion to
	a full class of models in which the Hopf bifurcation occurs throughout
	an open interval rather than at a single point, with the
	Rosenzweig--MacArthur configuration emerging as the limit of vanishing
	predator self--regulation.
	
	\subsection{Practical consequences for parameter selection and numerical localization}\label{subsec:practical}
	The localization theorem has a concrete practical consequence beyond its
	conceptual content. In many predator-prey models, locating Hopf,
	Bogdanov--Takens, or Neimark--Sacker points requires exploring parameter
	space numerically with little a~priori guidance: one searches for
	parameter values at which a coexistence equilibrium simultaneously
	satisfies the spectral conditions of the desired bifurcation. The
	dimension of this search and the absence of geometric constraints often
	make the procedure expensive and prone to missing entire branches of
	the bifurcation diagram.
	
	The geometric localization changes this situation. Since every Hopf or
	BT point lies on an ascending branch of the prey nullcline, and (for
	small step size) the discrete NS point lies on the same ascending
	branch nearby, the search region in state space is reduced
	\emph{before} any computation begins: regions
	with $g'(x) = 0$ at a coexistence equilibrium can be excluded
	immediately as spectral barriers. The critical points of the nullcline
	function as explicit boundaries that delimit the admissible parameter
	subdomain, providing a concrete reduction of the effective search
	space. In numerical continuation~\cite{Kuznetsov2004,KuznetsovMeijer2019},
	this translates into better initial guesses, faster convergence, and
	a systematic way to ensure that no bifurcation branch is missed.
	From this viewpoint, the theorem turns the localization of oscillatory
	transitions from a problem of blind exploration into one of geometrically
	guided search.
	
	\subsection{A note on the simplicity of the mechanism}%
	\label{subsec:simplicity}
	The proof of Theorem~\ref{thm:general} rests on the identity
	$J_{11}=p(x)g'(x)$ established in Lemma~\ref{lem:J11_prop}. This
	identity is elementary; once stated, the localization theorem follows
	in a few lines. Such brevity should not be mistaken for triviality.
	The phenomenon it captures, the systematic confinement of oscillatory
	bifurcations to ascending branches of the prey nullcline, holds across
	a wide range of ecologically distinct models with quadratic, cubic,
	and rational nullcline geometries, and persists under
	discretization. The fact that all these cases share a single algebraic
	mechanism is itself the content of the result. The proof is short not
	because the phenomenon is simple, but because the correct geometric
	formulation removes most of the algebraic overhead and leaves the
	structural organizer of the bifurcation landscape visible. In this
	sense, the theorem illustrates that significant structural information
	about a nonlinear system can sometimes be extracted not from heavy
	machinery but from identifying the geometric object that already
	encodes the relevant constraints.
	
	\medskip
	Several directions emerge naturally from this geometric framework.
	\begin{enumerate}[label=(\roman*)]
		\item \emph{Non--Gause systems.} Many models with ratio--dependent
		functional responses or predator territoriality do not fit exactly the
		form~\eqref{eq:gause}, yet often $J_{11}$ remains proportional to
		$g'(x)$. Extending the localization to such systems would clarify the
		scope of the spectral barrier mechanism.
		\item \emph{First Lyapunov coefficient.} Computing or bounding
		$\ell_1$ as a function of $x^*$ within the ascending branch would
		determine where sub and supercritical Hopf bifurcations occur, linking
		the nullcline geometry to the criticality of the limit cycle.
		\item \emph{Higher--codimension phenomena.} The models cited exhibit
		degenerate Hopf and BT bifurcations of codimension up to~5. Are these
		higher--order phenomena also confined by the nullcline critical points?
		\item \emph{Spatial extensions.} In reaction--diffusion settings,
		Turing instabilities often compete with Hopf bifurcations. The nullcline
		geometry might similarly constrain the location of Turing--Hopf
		interactions.
		\item \emph{Geometrically guided continuation algorithms.}
		Standard numerical bifurcation tools such as
		MatCont~\cite{Kuznetsov2004,KuznetsovMeijer2019} perform parameter
		continuation in a structurally agnostic manner: they detect Hopf,
		BT and NS points by scanning parameter space and checking spectral
		conditions, with no a~priori information about where these
		bifurcations may occur. The geometric localization theorem turns
		structural information into explicit computational restrictions. In
		a Gause system, the admissible region for Hopf or BT lies on the
		ascending branches of the prey nullcline, and the discrete NS locus
		lies nearby on the same branch for small step size; this knowledge
		is available from the
		nullcline geometry before any continuation step. A natural research
		direction is to design hybrid symbolic--numerical algorithms in which
		the system is first classified structurally---Gause, predator--dependent,
		tritrophic---and the corresponding geometric and spectral
		restrictions are then activated automatically to guide the
		bifurcation search. Such a framework would replace blind exploration
		of parameter space by geometrically guided continuation, with
		potentially substantial gains in efficiency and reliability for
		large-scale parameter studies.
	\end{enumerate}
	
	The results presented here strengthen the view that the static geometry
	of the prey nullcline is not merely a descriptive tool but a structural
	organizer of dynamical transitions in predator--prey ecology.
	
	\section*{Acknowledgements}
	E. Chan-L\'opez was supported by SECIHTI through the program ``Estancias
	Posdoctorales por México'' (CVU 422090). A. Mart\'in-Ruiz acknowledges
	financial support from UNAM-PAPIIT (project IG100224), UNAM-PAPIME
	(project PE109226), SECIHTI (project CBF-2025-I-1862), and the Marcos
	Moshinsky Foundation. The authors thank Jaume Llibre for helpful
	comments and suggestions.
	
	\section*{Ethics declarations}
	\subsection*{Conflict of interest}
	The authors declare no conflicts of interest.
	
	\subsection*{Ethical Approval}
	Not applicable.

\end{document}